\theoremstyle{plain}
\newtheorem{thm}{Theorem}[section]
\newtheorem{lem}[thm]{Lemma}
\theoremstyle{definition}
\newtheorem{ex}[thm]{Example}
\newtheorem{rem}[thm]{Remark}
\numberwithin{equation}{section}
\newcommand{\R}{\mathbb{R}}
\begin{document}
\title[Nonlocal impulsive BVPs]{Positive solutions of some nonlocal impulsive boundary value problem}
\date{}


\subjclass[2000]{Primary 34B37, secondary  34A37, 34B10, 34B18}%
\keywords{Fixed point index, cone, positive solution, impulsive differential equation.}%

\author{Gennaro Infante}
\address{Gennaro Infante, Dipartimento di Matematica ed Informatica, Universit\`{a} della
Calabria, 87036 Arcavacata di Rende, Cosenza, Italy}%
\email{gennaro.infante@unical.it}%

\author{Paolamaria Pietramala}%
\address{Paolamaria Pietramala, Dipartimento di Matematica ed Informatica, Universit\`{a} della
Calabria, 87036 Arcavacata di Rende, Cosenza, Italy}%
\email{pietramala@unical.it}%

\begin{abstract}
We prove new results on the existence of positive solutions for some impulsive differential equation subject to
nonlocal boundary conditions. Our boundary conditions involve an affine functional given by a Stieltjes integral.
These cover the well known multi-point boundary conditions that are studied by various authors.
\end{abstract}

\maketitle

\section{Introduction}
Differential equations with impulses arise quite often in the study of different problems,
in particular are used as a model for evolutionary processes subject to a sudden rapid
change of their state at certain moments.
The theory of impulsive differential equations has become recently a quite active area of research.
For an introduction to this theory we refer to the books
\cite{bainov, be-he-nt-book, lak-bai-sim,sam-per}, which also contain a variety of interesting examples and applications.

More recently, boundary value problems (BVPs) for impulsive second-order differential equations
 have been studied by several authors,
see for example  \cite{afo,ao,erbe-liu,erbe-kra,guo2,guo1,lee-lee1,lee-liu,lin-jiang,liu-guo,liu-liu,rach-tom,zu-ji-oreg}
and the references therein.
In particular, the existence of positive solutions under the so-called $m$-point boundary conditions
 has been investigated,
in the context of impulsive differential equations, in \cite{feng-pang, feng-xie,jan}. 
Various techniques 
are utilized in the above papers: the Leggett-Williams theorem, the Schauder fixed point theorem, the method of upper and lower solutions, 
the fixed point index on cones and, of course,  
the well-known Guo-Krasnosel'ski\u \i{}  theorem on cone-compression and cone-expansion.

In this paper, we establish new results for the existence of positive solutions
for the second order impulsive differential equation
\begin{equation}\label{de}
u''(t)+g(t)f(t,u(t))=0,\ t \in (0,1),\ t\neq \tau,
\end{equation}
\begin{equation}\label{deim}
\Delta u|_{t=\tau}=I(u(\tau)),\ \Delta u'|_{t=\tau}=\frac{I(u(\tau))}{\tau-1},
\end{equation}
subject to the nonlocal boundary conditions (BCs)
\begin{equation}\label{debc}
u(0)={\alpha}[u],\; u(1)=0.
\end{equation}
Here $\tau \in (0,1)$, $\Delta v|_{t=\tau}$ denotes the ``jump'' of $v(t)$ in $t=\tau$, that is
$$
\Delta v|_{t=\tau}=v(\tau^+)-v(\tau^-),
$$
where $v(\tau^-)$, $v(\tau^+)$ are the left and right limits of $v(t)$ in $t=\tau$,
and
$\alpha[u]$ is a positive functional given by
$$
\alpha[u]=A_{0}+\int_0^1 u(s)\,dA(s),
$$
involving a Lebesgue-Stieltjes integral.
The impulsive differential equation \eqref{de}-\eqref{deim}, under different BCs has been studied in \cite{guo1}.
The type of BC we study here is quite general and includes as special cases
$$
\alpha[u]=\sum_{i=1}^{m} \alpha_{i}u(\xi_{i})\quad \text{and}\quad
\alpha[u]=\int_{0}^{1} {\alpha}(s)u(s)\,ds,
$$
that is, multi-point and integral BCs, that are widely studied objects.
In the case of ordinary differential equations, this has been done in several papers, see for
example
 \cite{gnt,kttmna,kl07,macast,jw-seville,zy-na06,miraJMAA04} and the references therein.

The methodology here is to write the boundary value problem \eqref{de}-\eqref{debc}
as a \emph{perturbed} integral equation and we look for fixed points of an operator $T$ in a suitable cone
of positive functions in the space $PC[0,1]$.
One advantage of this approach is that we avoid lengthly calculations to determine the Green's 
function associated to the impulsive BVP.

For simplicity, we restrict our attention to the case of one impulse. In Remark \ref{two-impulses} we suggest how
this approach can be modified to work with a \emph{finite} number of impulses.

Our main ingredient is the classical fixed point index theory and in the last Section we provide an example to illustrate our theory.

\section{Existence of positive solutions of some
integral equations}\label{sec2}
 We study the existence of positive solutions of the integral equation
\begin{equation}\label{eqphamint2}
u(t)=\gamma(t){\alpha}[u] + \int_{0}^{1} k(t,s)g(s)f(s,u(s))\,ds + \gamma(t)\chi_{(\tau,1]}\frac{I(u(\tau))}{1-\tau},
\end{equation} where $t \in [0,1]$, and $\tau \in (0,1)$ is fixed. We work in the Banach space
 \begin{align*}
 PC[0,1]:=\{u:[0,1]\rightarrow\R: \;& u \;\text{is continuous in}\; t\in [0,1]\backslash\{\tau\},\\
 &\text{there exist}\; u(\tau^-)=u(\tau)\;\text{and}\;u(\tau^+)<\infty \},
 \end{align*}
 endowed with the supremum norm $\|u\|=\sup\{|u(t)|:\;t\in [0,1]\}$.

We make use of the classical fixed point index for compact maps
(see for example \cite{amann} or \cite{guolak}) on the cone
\begin{equation}\label{eqcone}
K=\bigl\{u\in PC[0,1]: u(t)\geq 0\ \forall \, t\in [0,1]\ \text{and}\ \min_{t \in [a,b]}u(t)\geq c\|u\|\bigr\},
\end{equation}
where $[a,b]$ is some subset of $(\tau,1)$ and $c$ is a positive constant.

 From now on, we assume that $f,g,
\alpha , \gamma, I$ and the kernel $k$ have the following
properties:
\begin{enumerate}
\item [$(C_{1})$] $f:[0,1]\times[0,\infty)
\to [0,\infty)$ satisfies Carath\'{e}odory conditions, that is, for each $u$, $t \mapsto f(t,u)$ is measurable
and for almost
every $t$, $u \mapsto f(t,u)$ is continuous, and  for every $r>0$ there
exists a $L^\infty$-function $\phi_{r}:[0,1]\to [0,\infty)$ such that
$$
f(t,u)\le \phi_{r}(t) \quad\text{for almost all }t\in [0,1] \;\;\text{and
all }u\in [0,r].$$
\item [$(C_{2})$] $k:[0,1]\times [0,1]\to [0,\infty)$ is measurable, and for every $t_1\in [0,1]$ we have
                  $$
                  \lim_{t \to t_1}\int_0^1 |k(t,s)-k(t_1,s)|\phi_{r}(s)\,ds=0.
                  $$
\item [$(C_{3})$] There exist  $[a,b]\subset (\tau,1)$, a  $L^\infty$-function  $\Phi:[0,1]\to
                  [0,\infty)$ and a constant $c_{1} \in (0,1]$ such that
\begin{align*}
           k(t,s)\leq \Phi(s) \text{ for } &t \in [0,1] \text{ and almost every }s\in [0,1]\\
           k(t,s) \geq c_{1}\Phi(s) \;\text{ for } &t\in [a,b] \text{ and almost every } s \in [0,1].
\end{align*}
\item [$(C_{4})$] $\gamma :[0,1]\to [0,\infty)$ is continuous and there exists a constant $c_{2}\in (0,1]$ such that
                  $$
                  \gamma (t) \geq c_{2}\|\gamma\| \;\text{ for } t\in [a,b].
                  $${}
\item [$(C_{5})$] ${\alpha}: K \to [0,\infty)$ is a continuous functional with
                  $$
                  {\alpha}[u]= A_{0}+\int_0^1 u(s)\,dA(s),
                  $$
                  where $dA$ is a positive  Lebesgue-Stieltjes measure, $A$ is continuous in $\tau$ and
                   $\int_0^1 dA(s)<\infty$.
\item [$(C_{6})$] $g\in L^1[0,1]$, $g\geq 0$ a.\,e. and $\int_a^b \Phi(s)g(s) \,ds>0$.
\item [$(C_{7})$]$I:[0,\infty)\rightarrow[0,\infty)$ is a continuous function and there exist $\delta_1,\delta_2\geq 0$ such that
 $$
 \delta_1 x\leq I(x) \leq \delta_{2}x \text{ for } x\in [0,\infty).
 $$
\end{enumerate}

Under these hypotheses we can work in the cone \eqref{eqcone},
 where $c=\min\{c_{1},c_{2}\}$ and $[a,b]$ as in $(C_{3})$.
 
If $\Omega$ is a bounded open subset of $K$ (in the relative
topology) we denote by $\overline{\Omega}$ and $\partial \Omega$
the closure and the boundary relative to $K$. We write
$$
K_{r}=\{u\in K: \|u\|<r\}  \text{ and } \overline{K}_{r}=\{u\in K:\|u\|\leq
r\}.
$$

 We consider now the  map $T: K\rightarrow PC[0,1]$ defined, for $u\in K$, by
$$
Tu(t):= \gamma(t){\alpha}[u] + \int_{0}^{1} k(t,s)g(s)f(s,u(s))\,ds
+ \gamma(t) {\chi}_{(\tau,1]}\frac{I(u(\tau))}{1-\tau}.
$$

In order to prove that $T$ is compact, we make use of the following compactness criterion, which can be found in
\cite{afo,lak-bai-sim} and is an extension of the classical Ascoli-Arzel\`{a} Theorem.
A key ingredient here is that the interval is compact.
For a compactness criterion on unbounded intervals and its applications to impulsive differential equations see 
\cite{edglgm,marino1,marino2}.

We recall that a set $S\subset PC[0,1]$ is said to be
\textit{quasi-equicontinuous} if for every $u\in S$ and for every
$\varepsilon>0$ there exists $\delta>0$ such that $t_1,t_2 \in
[0,\tau]$ (or $t_1,t_2 \in (\tau,1]$) and $|t_1-t_2|<\delta$
implies $|u(t_1)-u(t_2)|<\varepsilon.$
\begin{lem} \label{dan}
A set $S\subseteq PC[0,1]$ is relatively compact in $PC[0,1]$ if
and only if $S$ is bounded and quasi-equicontinuous.
\end{lem}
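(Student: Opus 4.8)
The plan is to reduce the statement to the classical Ascoli--Arzel\`{a} theorem by cutting at the impulse point $\tau$. The conceptual backbone is that $u\in PC[0,1]$ restricts to a genuinely continuous function $u_1:=u|_{[0,\tau]}\in C[0,\tau]$, while $u|_{(\tau,1]}$ extends continuously to $[\tau,1]$ by assigning it the finite right limit $u(\tau^+)$ at $\tau$; call this extension $u_2\in C[\tau,1]$. Since $u$ is recovered from the pair $(u_1,u_2)$ and $|u(\tau^+)|\le\sup_{(\tau,1]}|u|$, the map $u\mapsto(u_1,u_2)$ is an isometric isomorphism of $PC[0,1]$ onto $C[0,\tau]\times C[\tau,1]$ with the max norm (it is onto because $w_1(\tau)$ and $w_2(\tau)$ may be prescribed independently, which is precisely what produces the jump). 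Under this identification, boundedness of $S$ corresponds to boundedness of $S_1:=\{u_1:u\in S\}$ and of $S_2:=\{u_2:u\in S\}$; quasi-equicontinuity of $S$ (in its intended reading, uniform over the family) corresponds to equicontinuity of $S_1$ on $[0,\tau]$ and of $S_2$ on $[\tau,1]$; and relative compactness of $S$ corresponds to relative compactness of $S_1$ and $S_2$, because a subset of a finite product of metric spaces is relatively compact exactly when each of its projections is.

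First I would establish the ``if'' direction, which is the one used in the sequel. Given a sequence $(u_n)\subset S$, I apply Ascoli--Arzel\`{a} on the compact interval $[0,\tau]$ to the bounded, equicontinuous family $\{u_n|_{[0,\tau]}\}$ to extract a subsequence converging uniformly on $[0,\tau]$; passing to this subsequence I then apply Ascoli--Arzel\`{a} again on $[\tau,1]$ to the associated extended functions $\{u_{2,n}\}$, noting that $|u_n(\tau^+)|\le\|u_n\|$ keeps the family bounded and that equicontinuity on $(\tau,1]$ transfers to $[\tau,1]$ by letting $t\to\tau^+$. This yields a further subsequence $(u_{n_k})$ with $u_{n_k}|_{[0,\tau]}\to v_1$ in $C[0,\tau]$ and $u_{2,n_k}\to v_2$ in $C[\tau,1]$. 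Defining $v:=v_1$ on $[0,\tau]$ and $v:=v_2$ on $(\tau,1]$, one checks that $v\in PC[0,1]$ (it is continuous off $\tau$, left-continuous at $\tau$, with $v(\tau^+)=v_2(\tau)<\infty$) and that $\|u_{n_k}-v\|=\max\bigl(\|u_{n_k}|_{[0,\tau]}-v_1\|_\infty,\,\|u_{2,n_k}-v_2\|_\infty\bigr)\to 0$. Hence every sequence in $S$ has a subsequence converging in $PC[0,1]$, i.e.\ $S$ is relatively compact.

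For the ``only if'' direction I would use total boundedness. A relatively compact set is bounded. Given $\varepsilon>0$, cover $\overline{S}$ by finitely many balls $B(w_i,\varepsilon/3)$, $i=1,\dots,N$; each $w_i\in PC[0,1]$ is uniformly continuous on $[0,\tau]$ and, through its continuous extension, on $(\tau,1]$, so it admits a modulus $\delta_i>0$ for $\varepsilon/3$. Put $\delta=\min_i\delta_i$. For $u\in S$ choose $w_i$ with $\|u-w_i\|<\varepsilon/3$; then for $t_1,t_2$ both in $[0,\tau]$ or both in $(\tau,1]$ with $|t_1-t_2|<\delta$, the estimate $|u(t_1)-u(t_2)|\le|u(t_1)-w_i(t_1)|+|w_i(t_1)-w_i(t_2)|+|w_i(t_2)-u(t_2)|<\varepsilon$ yields quasi-equicontinuity of $S$.

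The only real obstacle is the bookkeeping at $\tau$: one must consistently replace the half-open interval $(\tau,1]$ by the compact interval $[\tau,1]$ using the finite right limit $u(\tau^+)$ before invoking the classical Ascoli--Arzel\`{a} theorem, and then verify that the function obtained by gluing the two uniform limits lies back in $PC[0,1]$ rather than merely in $C[0,1]$. Once the decomposition $PC[0,1]\cong C[0,\tau]\times C[\tau,1]$ is in place, the remainder is the standard Ascoli--Arzel\`{a} argument carried out in each coordinate.
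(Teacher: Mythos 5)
Your argument is correct. Note, though, that the paper itself offers no proof of this lemma: it is quoted as a known compactness criterion with a pointer to the references \cite{afo,lak-bai-sim}, so there is no in-paper proof to compare against. Your route --- the isometric identification $PC[0,1]\cong C[0,\tau]\times C[\tau,1]$ obtained by extending $u|_{(\tau,1]}$ to $[\tau,1]$ via the finite limit $u(\tau^+)$, followed by the classical Ascoli--Arzel\`a theorem in each factor for the ``if'' direction and a total-boundedness/triangle-inequality argument for the ``only if'' direction --- is precisely the standard proof given in such references, and all the delicate points (the isometry $\|u\|=\max\{\|u_1\|_\infty,\|u_2\|_\infty\}$ since $|u(\tau^+)|\le\sup_{(\tau,1]}|u|$, surjectivity because the values $w_1(\tau)$ and $w_2(\tau)$ are independent, and the fact that the glued limit lands in $PC[0,1]$) are handled correctly. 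You were also right to flag that the paper's definition of quasi-equicontinuity must be read with $\delta$ uniform in $u\in S$: with the literal pointwise-in-$u$ reading the ``if'' direction would be false (e.g.\ $S=\{\sin(nt):n\in\mathbb{N}\}$ is bounded and each member is uniformly continuous, yet $S$ is not relatively compact), so making that reading explicit is part of a complete proof rather than a cosmetic remark.
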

\begin{thm}\label{thmk}
 If the hypotheses $(C_{1})$-$(C_{7})$ hold for some $r>0$, then $T$ maps
 $\overline{K}_{r}$ into $K$. When these hypotheses
 hold for each $r>0$, $T$  maps $K$ into $K$. Moreover,
 $T$ is a compact map.
\end{thm}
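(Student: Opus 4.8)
The plan is to fix $r>0$, take $u\in\overline{K}_r$, and verify in turn that (i) $Tu$ is a well-defined element of $PC[0,1]$, (ii) $Tu\in K$; and then separately to establish the continuity of $T$ and the relative compactness of the image $T(\overline{K}_r)$ via Lemma \ref{dan}. For (i), note that the three summands defining $Tu$ are, respectively: $\gamma(t)$ times the \emph{constant} $\alpha[u]$, which is continuous on $[0,1]$ by $(C_4)$ and finite by $(C_5)$ since $\alpha[u]=A_0+\int_0^1 u(s)\,dA(s)\le A_0+r\int_0^1 dA(s)<\infty$; the function $t\mapsto\int_0^1 k(t,s)g(s)f(s,u(s))\,ds$, which is continuous on $[0,1]$ because $0\le g(s)f(s,u(s))\le g(s)\phi_r(s)$ by $(C_1)$, so $(C_2)$ forces $\int_0^1|k(t,s)-k(t_1,s)|g(s)f(s,u(s))\,ds\to 0$ as $t\to t_1$ for each $t_1$; and $\gamma(t)\chi_{(\tau,1]}(t)I(u(\tau))/(1-\tau)$, which is continuous on $[0,1]\setminus\{\tau\}$, vanishes on $[0,\tau]$, and has finite right limit $\gamma(\tau)I(u(\tau))/(1-\tau)$ at $\tau$ (finite because $I(u(\tau))\le\delta_2 u(\tau)\le\delta_2 r$ by $(C_7)$). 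Hence $Tu\in PC[0,1]$, with left limit at $\tau$ equal to the value there.

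For (ii), every ingredient $\gamma,k,g,f,I$ and the measure $dA$ is nonnegative, $A_0\ge0$, and $u\ge0$ on $[0,1]$, so $\alpha[u]\ge0$ and $Tu(t)\ge0$ for all $t$. To obtain the Harnack-type inequality, observe that for all $t\in[0,1]$ the bounds $k(t,s)\le\Phi(s)$ from $(C_3)$, $\gamma(t)\le\|\gamma\|$ from $(C_4)$, and $\chi_{(\tau,1]}\le 1$ give
\[
\|Tu\|\le\|\gamma\|\,\alpha[u]+\int_0^1\Phi(s)g(s)f(s,u(s))\,ds+\|\gamma\|\frac{I(u(\tau))}{1-\tau}=:M_u.
\]
Since $[a,b]\subset(\tau,1)$ we have $\chi_{(\tau,1]}(t)=1$ for $t\in[a,b]$, so the lower bounds $k(t,s)\ge c_1\Phi(s)$ and $\gamma(t)\ge c_2\|\gamma\|$ on $[a,b]$ yield, for every $t\in[a,b]$, $Tu(t)\ge c\,M_u$ with $c=\min\{c_1,c_2\}$ (each of the three nonnegative terms is scaled down by at most $c$). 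Therefore $\min_{t\in[a,b]}Tu(t)\ge c\,M_u\ge c\|Tu\|$, i.e. $Tu\in K$. If $(C_1)$--$(C_7)$ hold for every $r>0$, the same computation applies to every $u\in K$, so $T(K)\subseteq K$.

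It remains to prove $T$ is a compact map. Boundedness of $T(\overline{K}_r)$ is immediate, since
\[
\|Tu\|\le M_u\le\|\gamma\|\Big(A_0+r\int_0^1 dA(s)\Big)+\|\Phi\|_\infty\int_0^1 g(s)\phi_r(s)\,ds+\|\gamma\|\frac{\delta_2 r}{1-\tau},
\]
which is finite ($\Phi\in L^\infty$, $g\in L^1$, $\phi_r\in L^\infty$) and independent of $u\in\overline{K}_r$. For quasi-equicontinuity, take $t_1,t_2$ both in $[0,\tau]$ or both in $(\tau,1]$; then $\chi_{(\tau,1]}(t_1)=\chi_{(\tau,1]}(t_2)$, so
\[
|Tu(t_1)-Tu(t_2)|\le|\gamma(t_1)-\gamma(t_2)|\Big(\alpha[u]+\frac{I(u(\tau))}{1-\tau}\Big)+\int_0^1|k(t_1,s)-k(t_2,s)|g(s)\phi_r(s)\,ds,
\]
where the first term is controlled uniformly in $u\in\overline{K}_r$ by the uniform continuity of $\gamma$ on $[0,1]$ together with the uniform bounds on $\alpha[u]$ and $I(u(\tau))$, and the second is controlled uniformly in $u$ by $(C_2)$ (applied to $g\phi_r$, using $g(s)f(s,u(s))\le g(s)\phi_r(s)$). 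By Lemma \ref{dan}, $T(\overline{K}_r)$ is relatively compact. Continuity of $T$ on $\overline{K}_r$ follows from dominated convergence: if $u_n\to u$ then $f(s,u_n(s))\to f(s,u(s))$ a.e.\ with integrable dominating function $g\phi_r$, while $(C_5)$ and $(C_7)$ give $\alpha[u_n]\to\alpha[u]$ and $I(u_n(\tau))\to I(u(\tau))$.

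I expect the main obstacle to be the quasi-equicontinuity of the middle (integral) summand: one must make the modulus-of-continuity estimate \emph{uniform} over all of $\overline{K}_r$ — which is exactly why the $u$-independent domination $f(s,u(s))\le\phi_r(s)$ from $(C_1)$ is indispensable — and one must split into the subintervals $[0,\tau]$ and $(\tau,1]$ so that the discontinuous summand $\gamma(t)\chi_{(\tau,1]}(t)I(u(\tau))/(1-\tau)$ never contributes across the jump at $\tau$. A secondary point requiring care is checking that this last summand preserves the $PC[0,1]$ structure, namely that its right limit at $\tau$ is finite and that its left limit there agrees with its value.
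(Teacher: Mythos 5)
Your proof is correct and follows essentially the same route as the paper: nonnegativity plus the upper bound via $\Phi$ and $\|\gamma\|$, the lower bound on $[a,b]$ (where $\chi_{(\tau,1]}\equiv 1$) giving $\min_{t\in[a,b]}Tu(t)\ge c\|Tu\|$ with $c=\min\{c_1,c_2\}$, then boundedness and quasi-equicontinuity split at $\tau$, concluding by Lemma \ref{dan}. Your extra checks that $Tu\in PC[0,1]$ and that $T$ is continuous (via dominated convergence) are sound additions that the paper leaves implicit.
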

\begin{proof}{}
 Let $u\in
\overline{K}_{r}$. Then we have, for $t \in [0,1]$,
$$
Tu(t)= \gamma(t)\Bigl({\alpha}[u]+\chi_{(\tau,1]}\frac{I(u(\tau))}{1-\tau}\Bigr)+\int_{0}^{1} k(t,s)g(s)f(s,u(s))\,ds\geq 0,
$$
furthermore 
$$
Tu(t) \leq \gamma (t)\Bigl({\alpha}[u] + \frac{I(u(\tau))}{1-\tau}\Bigr)+\int_0^1\Phi(s)g(s)f(s,u(s))\,ds
$$
and therefore we obtain
\begin{equation}\label{stellina}
\|Tu\|\leq \|\gamma\|\Bigl({\alpha}[u] + \frac{I(u(\tau))}{1-\tau}\Bigr)+\int_0^1 \Phi(s)g(s)f(s,u(s))\,ds.
\end{equation}
Then we have
\begin{multline*}
 \min_{t\in [a,b]}Tu(t) \geq c_{2}\|\gamma\|\Bigl(\alpha[u] + \frac{I(u(\tau))}{1-\tau}\Bigr)+c_{1} \int_0^1 \Phi(s)g(s)f(s,u(s))\,ds
 \\ \geq c \Bigl[\|\gamma\|\Bigl({\alpha}[u] + \frac{I(u(\tau))}{1-\tau}\Bigr)+\int_0^1 \Phi(s)g(s)f(s,u(s))\,ds \Bigr]\geq c \|Tu\|.
\end{multline*}
Hence $Tu\in K$ for every $u\in
\overline{K}_{r}$.
Now, we show that the map $T$ is compact.
Firstly, we show that $T$ sends bounded sets into bounded sets.
It is enough to see that $T(\overline{K}_{r})$ is bounded. Let $u \in \overline{K}_{r}$.
Then, for all $t \in [0,1]$, from \eqref{stellina} we have
\begin{align*}
 \|Tu\| &\leq \|\gamma\|\Bigl({\alpha}[u] + \frac{\delta_2 r}{1-\tau}\Bigr)+ \int_{0}^{1} \Phi(s)g(s)\phi_r(s)\,ds\\
 &\leq \|\gamma\|\Bigl(A_0+r\int_0^1 dA(s)  +\frac{\delta_2 r}{1-\tau}\Bigr)+M_r,
\end{align*}
for some $0\leq M_r<\infty$.

We prove now that $T$ sends bounded
sets into quasi-equicontinuous sets.\newline
It is sufficient to prove this for $t_1,t_2\in (\tau,1]$,
$t_1<t_2$ and $u \in \overline{K}_{r}$.  We have
\begin{align*}
|Tu(t_1)-Tu(t_2)|\leq & |\gamma(t_1)-\gamma(t_2)|\Bigl({\alpha}[u] +\frac{I(u(\tau))}{1-\tau}\Bigr)\\
&+\int_{0}^{1}| k(t_1,s)-k(t_2,s)|g(s)\phi_r(s)\,ds.
\end{align*}
Then $|Tu(t_1)-Tu(t_2)|\rightarrow 0$ when $t_1\rightarrow t_2$.
From Lemma~\ref{dan} we can conclude that $T$ is a compact map.
\end{proof}
Let $dB_1$ be the Dirac measure of weight ${\delta_1}/{(1-\tau)}$ in $\tau$ and let
$dB_2$ be the Dirac measure of weight ${\delta_2}/{(1-\tau)}$ in $\tau$.
We make use of the two functionals
$$
\alpha_1[u]:=A_{0}+\int_0^1
u(s)\,dA(s)+\int_{0}^{1}u(s)dB_1(s):=A_{0}+\int_0^1
u(s)\,dA_1(s),
$$

$$
\alpha_2[u]:=A_{0}+\int_0^1
u(s)\,dA(s)+\int_{0}^{1}u(s)dB_2(s):=A_{0}+\int_0^1
u(s)\,dA_2(s)
$$
and of the following numbers
$$
f^{0,\rho}:=\sup_{0 \leq u \leq \rho, \;0 \leq t \leq 1} \frac{f(t,u)}{\rho},\quad
f_{\rho, \rho/c}:=\inf_{\rho \leq u \leq \rho/c, \; a \leq t \leq b} \frac{f(t,u)}{\rho},
$$
\begin{equation}\label{eqmbigm}
\dfrac{1}{m}:= \sup_{t \in [0,1]}\int_0^1 k(t,s)g(s)\,ds, \quad
\dfrac{1}{M(a,b)}:=\inf_{t \in [a,b]}\int_a^b k(t,s)g(s)\,ds.
\end{equation}

We assume that
\begin{enumerate}
\item [$(C_{8})$] The function $t \mapsto k(t,s)$ is integrable with respect to the measure $dA_2$, that is
                  $$
                  \mathcal{K}(s):=\int_0^1 k(t,s) \,dA_2(t)
                  $$
                  is well defined.
\end{enumerate}

Firstly, we  prove that the index is $1$ on the set $K_{\rho}$.

\begin{lem} \label{ind1b}
Suppose $\Gamma:=\int_0^1 \gamma(t) \,dA_2(t) < 1$ and assume that
there exists $\rho>0$ such that $u \neq Tu$ for all $u\in
\partial K_\rho$ and
\begin{enumerate}
\item[$(\mathrm{I}^{1}_{\rho})$]\label{EqB}
the following inequality holds:
\begin{equation}\label{EqC}
  \frac{A_{0}\| \gamma \|}{(1-\Gamma)\rho}
  +f^{0,\rho}\Bigl(\frac{\| \gamma \|}{(1-\Gamma)}\int_0^1\mathcal{K}(s)g(s)\,ds
  +\dfrac{1}{m}\Bigr)\leq 1.
\end{equation}
\end{enumerate}
Then the fixed point index, $i_{K}(T,K_{\rho})$, is $1$.
\end{lem}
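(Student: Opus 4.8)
The plan is to use the standard property of the fixed point index for compact maps (see \cite{amann} or \cite{guolak}): since $K_{\rho}$ is a bounded, relatively open subset of $K$ containing $0$ and $T$ has no fixed point on $\partial K_{\rho}$ by hypothesis, it is enough to check that $Tu\neq\lambda u$ for every $u\in\partial K_{\rho}$ and every $\lambda\ge 1$; the case $\lambda=1$ is exactly the no-fixed-point hypothesis, so only $\lambda>1$ needs to be excluded. Once this is done, the homotopy $h(t,u)=tTu$, $t\in[0,1]$, has no fixed point on $\partial K_{\rho}$ (for $t=0$ because $\|u\|=\rho>0$, for $t\in(0,1]$ because $tTu=u$ would give $Tu=(1/t)u$ with $1/t\ge 1$), hence $i_{K}(T,K_{\rho})=i_{K}(0,K_{\rho})=1$.

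So suppose, for contradiction, that $\lambda u = Tu$ with $u\in\partial K_{\rho}$ (hence $\|u\|=\rho$) and $\lambda>1$. First I would absorb the impulsive term into the affine functional: by $(C_{7})$, $\chi_{(\tau,1]}\le 1$ and $\gamma\ge 0$,
\[
\gamma(t)\chi_{(\tau,1]}\frac{I(u(\tau))}{1-\tau}\le \gamma(t)\frac{\delta_{2}u(\tau)}{1-\tau}=\gamma(t)\int_{0}^{1}u(s)\,dB_{2}(s),
\]
so that $\lambda u(t)=Tu(t)\le \gamma(t)\,\alpha_{2}[u]+\int_{0}^{1}k(t,s)g(s)f(s,u(s))\,ds$ for all $t\in[0,1]$, where moreover $f(s,u(s))\le \rho\,f^{0,\rho}$ a.e.\ because $0\le u(s)\le\rho$. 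Integrating this inequality against $dA_{2}$, exchanging the order of integration by $(C_{8})$ and nonnegativity, and writing $\int_{0}^{1}u\,dA_{2}=\alpha_{2}[u]-A_{0}\ge 0$, one gets
\[
\lambda\bigl(\alpha_{2}[u]-A_{0}\bigr)\le \Gamma\,\alpha_{2}[u]+\int_{0}^{1}\mathcal{K}(s)g(s)f(s,u(s))\,ds .
\]
Since $\lambda\ge 1$ and $\alpha_{2}[u]-A_{0}\ge 0$, the left-hand side is at least $\alpha_{2}[u]-A_{0}$, and using $\Gamma<1$ together with $f(s,u(s))\le\rho f^{0,\rho}$ this yields $\alpha_{2}[u]\le (1-\Gamma)^{-1}\bigl(A_{0}+\rho f^{0,\rho}\int_{0}^{1}\mathcal{K}(s)g(s)\,ds\bigr)$. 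On the other hand, taking the supremum over $t$ in the pointwise bound and using \eqref{eqmbigm} gives $\lambda\rho\le\|\gamma\|\,\alpha_{2}[u]+\rho f^{0,\rho}/m$. Substituting the estimate for $\alpha_{2}[u]$ and dividing by $\rho$ leads to
\[
\lambda\le \frac{A_{0}\|\gamma\|}{(1-\Gamma)\rho}+f^{0,\rho}\Bigl(\frac{\|\gamma\|}{1-\Gamma}\int_{0}^{1}\mathcal{K}(s)g(s)\,ds+\frac{1}{m}\Bigr)\le 1
\]
by \eqref{EqC}, contradicting $\lambda>1$.

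The pointwise estimates here are routine. The step that needs care is the interchange $\int_{0}^{1}\!\bigl(\int_{0}^{1}k(t,s)g(s)f(s,u(s))\,ds\bigr)dA_{2}(t)=\int_{0}^{1}\mathcal{K}(s)g(s)f(s,u(s))\,ds$, which is precisely what hypothesis $(C_{8})$ (together with $k,g,f\ge 0$ and $\int_{0}^{1}dA_{2}<\infty$) is designed to justify, and the bookkeeping caused by $\alpha_{2}$ being affine rather than linear --- in particular the observation that replacing $\lambda u$ by $u$ on the left-hand side above is legitimate only because $\lambda\ge 1$ and $\int_{0}^{1}u\,dA_{2}\ge 0$.
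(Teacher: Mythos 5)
Your argument is correct and follows essentially the same route as the paper: exclude $\lambda u=Tu$ for $\lambda>1$ on $\partial K_\rho$, absorb the impulse term into $\alpha_2$ via $(C_7)$, integrate against $dA_2$ using $(C_8)$, and feed the resulting bound on $\alpha_2[u]$ back into the pointwise estimate to contradict \eqref{EqC}. The only (harmless) difference is bookkeeping: you use $\lambda\ge 1$ early to drop $\lambda$ from the integrated inequality and bound $\alpha_2[u]$ by $(1-\Gamma)^{-1}\bigl(A_0+\rho f^{0,\rho}\int_0^1\mathcal{K}(s)g(s)\,ds\bigr)$, whereas the paper keeps $\lambda$ and estimates $(\lambda-\Gamma)^{-1}$ by $(1-\Gamma)^{-1}$ at the end, and you additionally spell out the homotopy $h(t,u)=tTu$ that the paper leaves implicit.
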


\begin{proof}
We show that $\lambda u\neq Tu$ for every $u \in \partial
K_{\rho}$ and for every $\lambda >1$. In fact, if there exists
$\lambda>1$ and $u \in
\partial K_{\rho}$ such that $\lambda u=Tu$ then
\begin{align*}
  \lambda u(t)&= \gamma(t)\Bigl({\alpha}[u]+\chi_{(\tau,1]}\frac{I(u(\tau))}{1-\tau}\Bigr)+\int_{0}^{1} k(t,s)g(s)f(s,u(s))\,ds
  \\
  &\leq \gamma(t)\Bigl({\alpha}[u]+\frac{\delta_2(u(\tau))}{1-\tau}\Bigr)+\int_{0}^{1} k(t,s)g(s)f(s,u(s))\,ds.
\end{align*}
Then we have
\begin{equation}\label{EqD}
 \lambda u(t)\leq \gamma(t){\alpha}_2[u]+\int_{0}^{1} k(t,s)g(s)f(s,u(s))\,ds
\end{equation}
and
\begin{equation*}\label{EqE}
  \lambda \int_0^1 u(t)dA_2(t)\leq {\alpha}_2[u] \Gamma + \int_0^1
\mathcal{K}(s)g(s)f(s,u(s))\,ds.
\end{equation*}
Hence we obtain
$$
(\lambda - \Gamma)\alpha_2 [u]\leq \lambda A_{0} + \int_0^1
\mathcal{K}(s)g(s)f(s,u(s))\,ds.
$$
Substituting into \eqref{EqD} gives
$$
\lambda u(t)\leq \frac{\lambda A_{0} \gamma (t)}{\lambda - \Gamma} +
\frac{\gamma (t)}{\lambda - \Gamma} \int_0^1
\mathcal{K}(s)g(s)f(s,u(s))\,ds + \int_0^1
k(t,s)g(s)f(s,u(s))\,ds.
$$
Taking the supremum for $t\in [0,1]$ gives
\begin{align*}
\lambda \rho &\leq \frac{\lambda A_{0}\| \gamma \|}{\lambda -
\Gamma} + \frac{\| \gamma \|}{\lambda - \Gamma} \int_0^1
\mathcal{K}(s)g(s)\rho f^{0,\rho}\,ds + \sup_{t \in [0,1]} \int_0^1
k(t,s)g(s)\rho f^{0,\rho}\,ds \\& < \frac{ A_{0}\| \gamma \|}{1- \Gamma}
+ \frac{\| \gamma \|}{1- \Gamma} \int_0^1
\mathcal{K}(s)g(s)\rho f^{0,\rho}\,ds + \sup_{t \in [0,1]} \int_0^1
k(t,s)g(s)\rho f^{0,\rho}\,ds.
 \end{align*}
  Thus we have,
\begin{equation*}\label{EqF}
\lambda < \frac{A_{0}\| \gamma \|}{(1-\Gamma)\rho}
  +f^{0,\rho}\Bigl(\frac{\| \gamma \|}{(1-\Gamma)}\int_0^1\mathcal{K}(s)g(s)\,ds
  +\dfrac{1}{m}\Bigr)\leq 1.
\end{equation*}
This contradicts the fact that $\lambda>1$ and proves the result.
\end{proof}
We make use of the open set
$$
V_\rho=\bigl\{u \in K: \min_{ t\in
[a,b]}u(t)<\rho \bigr\}.
$$
 $V_\rho$ is similar to the set
called $\Omega_{\rho /c}$ in \cite{kljlms}. Note that $K_{\rho}\subset V_{\rho}\subset K_{\rho/c}$.

We now prove that the index is 0 on the set $V_{\rho}$.
\begin{lem} \label{idx0b}
Assume that there exists $\rho>0$ such that $u \neq Tu$ for $u\in
\partial V_\rho$ and
\begin{enumerate}
\item[$(\mathrm{I}^{0}_{\rho})$] the following inequalities hold:
\begin{equation}\label{alphid0}
{\alpha}_1[u]\geq {\alpha}_{0} \rho \text{ for } u\in \partial
V_\rho,
\end{equation}
where $\alpha_0\geq 0$, and
\begin{equation}\label{eqind0}
c_{2}\|\gamma\| \alpha_{0} + \dfrac{1}{M(a,b)}
f_{\rho, \rho/c}\geq 1.
\end{equation}
\end{enumerate}
Then we have $i_{K}(T,V_{\rho})=0$.
\end{lem}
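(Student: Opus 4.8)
The plan is to apply the standard index-zero criterion: it suffices to find $e\in K\setminus\{0\}$ such that $u\ne Tu+\lambda e$ for all $u\in\partial V_\rho$ and all $\lambda\ge 0$, and then to conclude $i_K(T,V_\rho)=0$ from the corresponding property of the fixed point index (see \cite{amann,guolak}); here $T$ is compact by Theorem~\ref{thmk}, $V_\rho$ is bounded because $V_\rho\subset K_{\rho/c}$, and $V_\rho$ is open in $K$ since $u\mapsto\min_{t\in[a,b]}u(t)$ is continuous on $PC[0,1]$ (every $u$ being continuous on $[a,b]\subset(\tau,1)$). I would take $e$ to be the constant function $1$, which belongs to $K$ because $c\in(0,1]$.

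First I would argue by contradiction, assuming $u=Tu+\lambda e$ for some $u\in\partial V_\rho$ and some $\lambda\ge0$. On $\partial V_\rho$ one has $\min_{t\in[a,b]}u(t)=\rho$, so, since $u\in K$, $\|u\|\le\rho/c$ and hence $\rho\le u(s)\le\rho/c$ for $s\in[a,b]$; consequently $f(s,u(s))\ge\rho\,f_{\rho,\rho/c}$ there. For $t\in[a,b]$ we have $t>\tau$, so $\chi_{(\tau,1]}(t)=1$, and using $I(u(\tau))\ge\delta_1 u(\tau)$ from $(C_7)$ together with the definition of the Dirac measure $dB_1$,
\[
Tu(t)\ \ge\ \gamma(t)\Bigl(\alpha[u]+\frac{\delta_1 u(\tau)}{1-\tau}\Bigr)+\int_a^b k(t,s)g(s)f(s,u(s))\,ds\ =\ \gamma(t)\,\alpha_1[u]+\int_a^b k(t,s)g(s)f(s,u(s))\,ds .
\]
Then, bounding $\gamma(t)\ge c_2\|\gamma\|$ on $[a,b]$ by $(C_4)$, using \eqref{alphid0}, the lower bound on $f$, the definition of $1/M(a,b)$ in \eqref{eqmbigm}, and taking the infimum over $t\in[a,b]$, I get
\[
\rho=\min_{t\in[a,b]}u(t)=\lambda+\min_{t\in[a,b]}Tu(t)\ \ge\ \lambda+\Bigl(c_2\|\gamma\|\,\alpha_0+\frac{f_{\rho,\rho/c}}{M(a,b)}\Bigr)\rho\ \ge\ \lambda+\rho
\]
by \eqref{eqind0}. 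Hence $\lambda=0$, so $u=Tu$ with $u\in\partial V_\rho$, contradicting the hypothesis. This shows $u\ne Tu+\lambda e$ on $\partial V_\rho$ for every $\lambda\ge0$, whence $i_K(T,V_\rho)=0$.

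The hard part is conceptual rather than computational: one must notice that on $[a,b]$ the impulsive contribution $\gamma(t)I(u(\tau))/(1-\tau)$ is, after using $I(x)\ge\delta_1 x$, exactly $\gamma(t)$ times the Dirac term defining $\alpha_1$, so that $\alpha[u]+\delta_1 u(\tau)/(1-\tau)=\alpha_1[u]$ for $t\in[a,b]$; this is the reason for introducing $\alpha_1$, and it is what permits the impulse to be folded into the nonlocal term before the cone estimates are applied. The remaining steps mirror those of Lemma~\ref{ind1b} with the inequalities reversed.
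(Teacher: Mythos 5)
Your proof is correct and follows essentially the same route as the paper: the choice $e\equiv 1$, the contradiction hypothesis $u=Tu+\lambda e$ on $\partial V_\rho$, the use of $I(x)\ge\delta_1 x$ to rewrite $\alpha[u]+\delta_1 u(\tau)/(1-\tau)$ as $\alpha_1[u]$, and the estimates via $(C_4)$, \eqref{alphid0}, \eqref{eqmbigm} and \eqref{eqind0} to force $\min_{t\in[a,b]}u(t)\ge\rho+\lambda$. The only cosmetic difference is that you treat all $\lambda\ge 0$ at once and dispose of $\lambda=0$ via the hypothesis $u\neq Tu$, whereas the paper takes $\lambda>0$ and gets the strict inequality $\min_{t\in[a,b]}u(t)>\rho$ directly; the two are equivalent.
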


\begin{proof}
Let $e(t)\equiv 1$ for $t\in [0,1]$. Then $e\in K$. We prove that
\begin{equation*}
u\neq T(u)+\lambda e\quad\text{for all } u\in \partial
V_{\rho}\quad\text{and } \lambda>0.
\end{equation*}
 In fact, if this does not happen, there exist $u\in \partial V_\rho$ and
$\lambda>0$ such that $u=Tu+\lambda e$. We have, for all $t\in [a,b]$
\begin{align*}
u(t)&= \gamma(t){\alpha}[u] + \int_{0}^{1} k(t,s)g(s)f(s,u(s))\,ds + \gamma(t)\chi_{(\tau,1]}\frac{I(u(\tau))}{1-\tau}+\lambda\\
&=\gamma(t)\Bigl({\alpha}[u] + \frac{I(u(\tau))}{1-\tau}\Bigr)+
\int_{0}^{1}k(t,s)g(s)f(s,u(s))\,ds + \lambda \\
&\geq \gamma(t)\Bigl({\alpha}[u] + \frac{\delta_1 u(\tau)}{1-\tau}\Bigr) +
\int_{0}^{1}k(t,s)g(s)f(s,u(s))\,ds + \lambda \\
&= \gamma(t){\alpha}_1[u] +
\int_{0}^{1}k(t,s)g(s)f(s,u(s))\,ds + \lambda \\
 &\ge
c_{2}\|\gamma\|{\alpha}_{0}\rho+\rho\int_a^b
k(t,s)g(s)f_{\rho, \rho/c}\,ds+\lambda\\
&\geq \rho\Bigl(c_{2}\|\gamma\| \alpha_{0} + \dfrac{1}{M(a,b)}
f_{\rho, \rho/c}\Bigr)+\lambda.
\end{align*}
By $(\mathrm{I}^{0}_{\rho})$, this implies that
$$
 \min_{t\in [a,b]}u(t)\geq \rho\Bigl(c_{2}\|\gamma\| \alpha_{0} + \dfrac{1}{M(a,b)}
f_{\rho, \rho/c}\Bigr)+\lambda \geq \rho+\lambda>\rho,
$$
contradicting the fact that $u\in \partial V_\rho$.
\end{proof}

Note that, by means of the two Lemmas above, one may also provide a result on the existence
of multiple positive solutions. In fact, if the nonlinearity $f$ has a
suitable oscillatory behavior, by nesting in an appropriate
way several
$V_{\rho}$'s and $K_{\rho}$'s, one may establish the existence of
 multiple positive solutions (we refer the reader to \cite{gijwems,kljlms}
to see the type of results that may be stated).
Here, for brevity, we state a result for the case of
one positive solution for Eq.~\eqref{eqphamint2}.

\begin{thm}\label{thmmsol1}
Eq.~\eqref{eqphamint2} has a positive solution in $K$ if either of
the following conditions hold.
\begin{enumerate}
\item[$(H_{1})$] There exist $\rho_{1},\rho_{2}\in (0,\infty)$ with
$\rho_{1}<\rho_2$ such that
$(\mathrm{I}^{1}_{\rho_{1}}), (\mathrm{I}^{0}_{\rho_{2}})$ hold.
  \item[$(H_{2})$] There exist
$\rho_{1},\rho_{2}\in (0,\infty)$ with
$ \rho_{1}<c\rho_{2}$ such that
$(\mathrm{I}^{0}_{\rho_{1}})$,
$(\mathrm{I}^{1}_{\rho_{2}})$ hold.
\end{enumerate}
\end{thm}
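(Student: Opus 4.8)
The plan is to produce the solution as a fixed point of $T$ in an annular subregion of $K$, by combining the two index computations of Lemmas~\ref{ind1b} and~\ref{idx0b} via the additivity property of the fixed point index. By Theorem~\ref{thmk}, $T$ is a compact self-map of $K$, so $i_K(T,\cdot)$ is defined on the sets below. I would first observe that if $T$ happens to have a fixed point $u$ on any of the boundaries $\partial K_{\rho_j}$, $\partial V_{\rho_j}$ ($j=1,2$) appearing below, then automatically $u\in K$ with $\|u\|\ge\rho_j>0$, so $u$ is already a nontrivial positive solution and we are done; hence we may assume no such boundary fixed points exist, which is exactly the non-degeneracy condition required by the two lemmas (the analytic inequalities they need being supplied by $(H_1)$ or $(H_2)$).

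Under $(H_1)$: apply Lemma~\ref{ind1b} with $\rho=\rho_1$ to get $i_K(T,K_{\rho_1})=1$ and Lemma~\ref{idx0b} with $\rho=\rho_2$ to get $i_K(T,V_{\rho_2})=0$. Since $\rho_1<\rho_2$ one has $\overline{K}_{\rho_1}\subset K_{\rho_2}\subset V_{\rho_2}$ (the last inclusion recorded just before Lemma~\ref{idx0b}), so additivity of the index gives
\[
i_K\bigl(T,\,V_{\rho_2}\setminus\overline{K}_{\rho_1}\bigr)=i_K(T,V_{\rho_2})-i_K(T,K_{\rho_1})=-1\neq 0 ,
\]
whence $T$ has a fixed point $u\in V_{\rho_2}\setminus\overline{K}_{\rho_1}$; as $\|u\|>\rho_1>0$, it is a nontrivial positive solution of \eqref{eqphamint2}.

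Under $(H_2)$: symmetrically, Lemma~\ref{idx0b} with $\rho=\rho_1$ gives $i_K(T,V_{\rho_1})=0$ and Lemma~\ref{ind1b} with $\rho=\rho_2$ gives $i_K(T,K_{\rho_2})=1$. Here $\overline{V_{\rho_1}}\subset\overline{K}_{\rho_1/c}\subset K_{\rho_2}$, using $V_{\rho_1}\subset K_{\rho_1/c}$ (again recorded before Lemma~\ref{idx0b}) and $\rho_1<c\rho_2$; so
\[
i_K\bigl(T,\,K_{\rho_2}\setminus\overline{V_{\rho_1}}\bigr)=i_K(T,K_{\rho_2})-i_K(T,V_{\rho_1})=1\neq 0 ,
\]
and the resulting fixed point $u\in K_{\rho_2}\setminus\overline{V_{\rho_1}}$ satisfies $\min_{t\in[a,b]}u(t)>\rho_1>0$, hence is again a nontrivial positive solution.

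I do not expect a genuine obstacle: all the work is in Lemmas~\ref{ind1b} and~\ref{idx0b}. The only care needed is bookkeeping---verifying that the closures of the inner sets lie inside the outer open sets so that the additivity property applies on the annular regions, and that the fixed point produced is nonzero (from the norm lower bound under $(H_1)$ and from $u\notin\overline{V_{\rho_1}}$ under $(H_2)$). One should also keep in mind that each use of Lemma~\ref{ind1b} tacitly relies on the standing hypothesis $\Gamma=\int_0^1\gamma(t)\,dA_2(t)<1$.
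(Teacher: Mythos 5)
Your argument is correct and is precisely the standard fixed point index argument that the paper itself invokes: the authors omit the proof, noting that it ``follows simply from properties of fixed point index'' with details as in \cite{gijwjmaa,kljdeds}, and your use of Lemmas~\ref{ind1b} and~\ref{idx0b} together with the inclusions $\overline{K}_{\rho_1}\subset K_{\rho_2}\subset V_{\rho_2}$ (for $(H_1)$) and $\overline{V_{\rho_1}}\subset \overline{K}_{\rho_1/c}\subset K_{\rho_2}$ (for $(H_2)$), plus additivity and the boundary-fixed-point caveat, is exactly that argument. No gaps; nothing further is needed.
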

We omit the proof which follows simply from properties of fixed
point index, for details of similar proofs see
\cite{gijwjmaa,kljdeds}.
\begin{rem}\label{two-impulses}
So far we have discussed the case of having the impulse in just one point $\tau \in (0,1)$. 
Similar arguments work in the case of a \emph{finite} number of impulses.

For example, in the case of two impulses, say 
$$ 
\Delta u|_{t=\tau_1}=I_1(u(\tau_1)),\quad \Delta 
u|_{t=\tau_2}=I_2(u(\tau_2)), 
$$ 
$$ 
\Delta u'|_{t=\tau_1}=\frac{I_1(u(\tau_1))}{\tau_1-1},\quad \Delta 
u'|_{t=\tau_2}=\frac{I_2(u(\tau_2))}{\tau_2-1}, 
$$ 

where $0<\tau_1<\tau_2<1$, one may work in the space (with an abuse of 
notation) 
 \begin{align*} 
 PC[0,1]:=\{u:[0,1]\rightarrow\R: \;& u \;\text{is continuous in}\; t\in 
[0,1]\backslash\{\tau_1,\tau_2\},\\ 
 &\text{there exist}\; 
u(\tau_i^-)=u(\tau_i)\;\text{and}\;u(\tau_i^+)<\infty,\;i=1,2 \}, 
 \end{align*} 
 and seek for fixed points of the operator 
 $$ 
\tilde{T}u(t):= \gamma(t)\Bigl({\alpha}[u] 
+{\chi}_{(\tau_1,1]}\frac{I_1(u(\tau_1))}{1-\tau_1}+ 
{\chi}_{(\tau_2,1]}\frac{I_2(u(\tau_2))}{1-\tau_2} \Bigr)+\int_{0}^{1} 
k(t,s)g(s)f(s,u(s))\,ds. 
$$ 
in the cone \eqref{eqcone}, where 
 $[a,b]$ this time is a subset of $(\tau_2,1)$. 

Thus, if there exist positive constants 
$\delta_{1,1},\delta_{1,2},\delta_{2,1},\delta_{2,2}$ such that 
 $$ 
 \delta_{1,i} x\leq I_i(x) \leq \delta_{2,i}x \text{ for } x\in [0,\infty) 
\; \text{and}\; i=1,2, 
 $$ 
 one may consider the measures $d\tilde{B}_1$ and $d\tilde{B}_2$, where 
$d\tilde{B}_1$ is the Dirac measure of weight ${\delta_{1,1}}/{(1-\tau_1)}$ 
in $\tau_1$ and of weight 
 ${\delta_{1,2}}/{(1-\tau_2)}$, 
and 
$d\tilde{B}_2$ is the Dirac measure of weight ${\delta_{2,1}}/{(1-\tau_1)}$ 
in $\tau_1$ and of weight 
${\delta_{2,2}}/{(1-\tau_2)}$ in $\tau_2$. 

The above can be used to provide a modified version of Lemmas 
\eqref{ind1b}-\eqref{idx0b} in this new context.
\end{rem}

\section{Positive solutions of the impulsive BVP.}
We now consider the BVP
\begin{equation}\label{de2}
u''(t)+g(t)f(t,u(t))=0,\ t \in (0,1),\ t\neq \tau,
\end{equation}
\begin{equation}\label{deim2}
\Delta u|_{t=\tau}=I(u(\tau)),\ \Delta u'|_{t=\tau}=\frac{I(u(\tau))}{\tau-1},
\end{equation}
\begin{equation}\label{debc2}
u(0)={\alpha}[u],\; u(1)=0,
\end{equation}
and we associate to this BVP the integral equation 
\begin{equation}\label{int}
u(t)=\gamma(t){\alpha}[u] + \int_{0}^{1} k(t,s)g(s)f(s,u(s))\,ds +
\gamma(t)\chi_{(\tau,1]}\frac{I(u(\tau))}{1-\tau},
\end{equation}
 where
\begin{equation*}
\gamma (t)=1-t\;\text{for all}\; t\in[0,1],\;\text{and}\;k(t,s)=\begin{cases} s(1-t),\ &s\leq t\\ t(1-s),\ &s>t.
\end{cases}
\end{equation*}

By a
solution of the BVP \eqref{de2}-\eqref{debc2} we mean a solution $u\in
PC[0,1]$ of the corresponding integral equation \eqref{int}.

Here may choose
$$
\Phi(s)=s(1-s).
$$
Therefore, we can take $[a,b]\subset (\tau,1)$ and
\begin{equation}\label{c}
c:= \min \{ a, 1-b \}.
\end{equation}
Now $(C_{2}),(C_{3}),(C_{4})$ are satisfied, \eqref{eqind0} reads more simply
\begin{equation}\label{gind0}
c_{2} \alpha_{0} +f_{{\rho},{\rho / c}} \cdot\frac{1}{M(a,b)}\geq 1,
\end{equation}
and \eqref{EqC} reads 
\begin{equation}\label{EqH}
  \frac{A_{0}}{\rho(1-\Gamma)}
  +\Bigl( \frac{1}{1-\Gamma}\int_{0}^{1}\mathcal{K}(s)g(s)\,ds
  +\frac{1}{m} \Bigl) f^{0,\rho} \leq 1.
\end{equation}
\begin{ex}
We now assume that $g \equiv 1$,
$\alpha [u]=\alpha u(\xi)$, with $\alpha >0$ and $\xi\in (\tau, 1)$. In this case we have
$$
\alpha_1[u]=\alpha u(\xi)+\dfrac{\delta_1}{1-\tau} u(\tau),\quad
\alpha_2[u]=\alpha u(\xi)+\dfrac{\delta_2}{1-\tau} u(\tau).
$$
We may take $A_0=0$
and $dA_2$ the Dirac measure of weight $\alpha$ in $\xi$ and of
weight ${\delta_2}/{(1-\tau)}$ in $\tau$. Thus
$$
\Gamma:=\int_0^1 \gamma(t) \,dA_2(t)=\alpha (1-\xi)+\delta_2
$$
and
$$
\int_0^1 \mathcal{K}(s)\,d(s)=\dfrac{\alpha}{2}\xi(1-\xi
)+\dfrac{\delta_2}{2} \tau.
$$
We can take $[a,b]$ such that $\xi \in [a,b]$.
Then we can set $\alpha_0=\alpha$, since $\alpha_1[u]\geq \alpha u(\xi)\geq \alpha \rho$ for
$u\in \partial V_\rho$. In this case \eqref{gind0} reads
\begin{equation}\label{gind02}
(1-b) \alpha +f_{{\rho},{\rho / c}} \cdot\frac{1}{M(a,b)}\geq 1,
\end{equation}
and \eqref{EqH} reads
\begin{equation}\label{EqH2}
 \Bigl( \frac{\alpha\xi(1-\xi
)+\delta_2\tau}{2-2(\alpha (1-\xi)+\delta_2)}
  +\frac{1}{m} \Bigl) f^{0,\rho} \leq 1.
\end{equation}

We now show that all the constants that appear in \eqref{gind02}
and \eqref{EqH2} can be computed.\newline
If we take $\tau={1}/{5}$, $\xi={1}/{2}$, $\alpha={4}/{5}$, $\delta_2={1}/{2}$, we can choose $[a,b]=[1/4,3/4]$.
This gives $m=8$, $M(a,b)=16$ and $c=1/4$. Therefore our requirements are
$f_{{\rho},{\rho/c}}\geq \frac{64}{5}$ and  $f^{0,\rho}\leq \frac{8}{13}$.
\end{ex}

\end{document}